\newcommand{\tcm}[1]{\textcolor{black}{#1}}
\newcommand{\emma}[1]{\textcolor{black}{#1}}
\newcommand{\emmaj}[1]{\textcolor{black}{#1}}
\theoremstyle{plain}
\newtheorem{theorem}{Theorem}[section]
\newtheorem*{main}{Main~Theorem}
\newtheorem{lemma}[theorem]{Lemma}
\theoremstyle{definition}
\newtheorem{definition}[theorem]{Definition}
\newtheorem{example}[theorem]{Example}
\def\D{\CMcal{D}}
\def\T{\CMcal{T}}
\def\A{\mathcal{A}}
\newcommand{\tdr}{\overline{\tau^{-1}\delta}}
\newcommand{\td}{\tau^{-1}\delta}
\newcommand{\id}{\mathrm{id}}
\newcommand{\suchthat}{\,:\,}
\newcommand{\spam}{\operatorname{span}}
\newcommand{\bdet}{\bm{\mathrm{det}}}
\newcommand{\bJT}{\bm{{JT}}}
\newcommand{\Sym}{\ensuremath{\operatorname{Sym}}}
\newcommand{\NCSym}{\ensuremath{\operatorname{NCSym}}}
\newcommand{\slashp}{\mid}
\newlength\cellsize \setlength\cellsize{15\unitlength}
\newcommand\cellify[1]{\def\thearg{#1}\def\nothing{}%
	\ifx\thearg\nothing
	\vrule width0pt height\cellsize depth0pt\else
	\hbox to 0pt{\usebox2\hss}\fi%
	\vbox to 15\unitlength{
		\vss
		\hbox to 15\unitlength{\hss$#1$\hss}
		\vss}}
\newcommand\tableau[1]{\vtop{\let\\=\cr
		\setlength\baselineskip{-16000pt}
		\setlength\lineskiplimit{16000pt}
		\setlength\lineskip{0pt}
		\halign{&\cellify{##}\cr#1\crcr}}}
\newcommand\expath[1]{%
	\hbox to 0pt{\usebox3\hss}%
	\vbox to 15\unitlength{
		\vss
		\hbox to 15\unitlength{\hss$#1$\hss}
		\vss}}
\newcommand\bas[1]{\omit \vbox to \cellsize{ \vss \hbox to \cellsize{\hss$#1$\hss} \vss}}
\theoremstyle{remark}
\numberwithin{equation}{section}
\numberwithin{figure}{section}
\title[Equal skew Schur functions in noncommuting variables]{Equality of skew Schur functions in noncommuting variables}
\author{Emma Yu Jin and Stephanie van Willigenburg}
\address{School of Mathematical Sciences, Xiamen University, Xiamen, Fujian 361005, China}
\email{yjin@xmu.edu.cn}
\address{Department of Mathematics, University of British Columbia, Vancouver, British Columbia V6T 1Z2, Canada}
\email{steph@math.ubc.ca}
\subjclass[2020]{Primary 05E05; Secondary 05A05, 05A18, 16T30}
\keywords{noncommuting variables, ribbons, skew Schur functions}
\date{\today}
\begin{document}
	
	\begin{abstract}
		The question of classifying when two skew Schur functions are equal is a substantial open problem, which remains unsolved for over a century. In 2022, Aliniaeifard, Li and van Willigenburg introduced skew Schur functions  in noncommuting variables, $s_{(\delta,\D)}$, where $\D$ is a connected skew diagram with $n$ boxes and $\delta$ is a permutation in the symmetric group $S_n$.  
		
		In this paper, we combine these two and classify  when two skew Schur functions in noncommuting variables are equal:  $s_{(\delta,\D)} = s_{(\tau,\T)}$ such that $\D\ne \T$  if and only if $\D$ is a nonsymmetric ribbon, $\T$ is the antipodal rotation of $\D$ and
		$\tdr$ is an explicit bijection between two set partitions determined by $\D$.
	\end{abstract}
	\maketitle
	
	\section{Introduction}\label{S:intro}
	
	The problem of classifying when two skew Schur functions are equal has been open since Schur introduced them in 1901 \cite{s:01}. While partial progress has been made \cite{ mw:14, mw:09,  rsw:07, y:17}, only a few special cases have been classified, for example \cite{btw:06, gut:09, ly:24, w:05}. The most notable of these classifications was that of ribbon Schur functions \cite{btw:06}, where the classification had further impact \cite{aww:21, acsz:20, az:14,   bdf:10}. Another long-standing problem was to find a basis analogous to that of Schur functions for the algebra of symmetric functions in noncommuting variables. This problem was posed in 2004 by Rosas and Sagan \cite{rs:04}, and was eventually resolved in 2022 by Aliniaeifard, Li and van Willigenburg \cite{alw:22}. In this short paper we combine these two problems and classify succinctly when two skew Schur functions in noncommuting variables are equal:
	
	\begin{main} Given two connected skew diagrams $\D$ and $\T$ such that $\D\ne \T$, we have that the skew Schur functions in noncommuting variables $$s_{(\delta, \D)} =s_{(\tcm{\tau},\T)}$$
		if and only if 
		\begin{enumerate}
			\item $\D$ is a nonsymmetric ribbon and
			\item $\T=\D^*$, and
			\item the bijection $\tdr : j\mapsto n+1-\td(j)$ preserves each block of the set partition $[\alpha]$ where $n=\vert \D\vert$ and $\alpha$ is the row-length composition of   $\D$.
		\end{enumerate}
	\end{main} 
	\emma{ We remark that the characterization of $s_{(\delta,\D)}=s_{(\tau,\T)}$ for skew diagrams $\D$ and $\T$ is reduced to studying this equality for connected skew diagrams $\D$ and $\T$, which is to be explained at the end of Section \ref{S:background}.}
	
	In particular, in Section~\ref{S:background}, we give all the background needed to understand the above theorem, and prove some small but valuable lemmas. Then we devote Section~\ref{S:proof} to proving the above theorem, which we restate just before the proof as Theorem~\ref{T:classification}.
	
	\section{Background}\label{S:background}
	
	We begin by reviewing the combinatorial concepts we need, before introducing our algebras and functions of study.
	
	\subsection{Compositions, partitions and set partitions}\label{ss:21}
	A \emph{composition} $\alpha = (\alpha_1, \ldots, \alpha_l)$ of $n$ is a finite sequence of positive integers $\alpha_1, \ldots, \alpha_l$ such that $\sum _{i=1}^l \alpha_i = n$, denoted by $\alpha \vDash n$. We call the $\alpha_i$ the \emph{parts} of $\alpha$, call $\ell(\alpha)=l$ the \emph{length} of $\alpha$ and call $|\alpha|=n$ the \emph{size} of $\alpha$. We denote by $0$ the unique composition of length and size 0. If the parts of $\alpha$ appear in weakly decreasing order, then we call this a \emph{partition} $\lambda$, denoted by $\lambda \vdash n$, and if the parts of $\alpha$ are allowed to include 0 then we call this a \emph{weak} composition. Note that every composition $\alpha$ determines a partition $\lambda(\alpha)= (\lambda(\alpha)_1, \ldots , \lambda(\alpha)_{\ell(\alpha)})$ obtained by listing the parts of $\alpha$ in weakly decreasing order. Given a composition $\alpha = (\alpha_1, \ldots, \alpha_{\ell(\alpha)})$ we define
	$$\alpha!=\alpha_1!\cdots \alpha_{\ell(\alpha)}! \textrm{ and } \alpha^*=(\alpha_{\ell(\alpha)},\ldots ,\alpha_{1}).$$
	Meanwhile, given two compositions of \tcm{$n$,} \emma{ say }$\alpha = (\alpha_1, \ldots, \alpha_{\ell(\alpha)}) $ and $\beta= (\beta_1, \ldots, \beta_{\ell(\beta)})$, we state that $\beta$ is a \emph{coarsening} of $\alpha$ (or $\alpha$ is a \emph{refinement} of $\beta$), denoted by $\beta\succcurlyeq \alpha$, if $\beta$ is obtained from $\alpha$ by adding together adjacent parts of $\alpha$, and $\beta$ \emph{dominates} $\alpha$ if $\sum _{j=1}^i \beta_j \geq \sum _{j=1}^i \alpha_j$ for all $1\leq i \leq \min\{\ell(\alpha), \ell(\beta)\}$.
	
	\begin{example}\label{EX:compositions}
		If $\alpha = (1,2,1,3,2) \vDash 9$ then $\ell(\alpha) = 5$, $|\alpha| = 9$ and $\lambda(\alpha) = (3,2,2,1,1)$. Note that $\alpha ! = 1!2!1!3!2! = 24$, $\alpha ^\ast = (2,3,1,2,1)$, and $(1,3,3,2) \succcurlyeq (1,2,1,3,2)$.
	\end{example}
	
	Given a partition $\lambda = (\lambda_1, \ldots, \lambda_{\ell(\lambda)})$, we say that its \emph{diagram}, also denoted by $\lambda$, is the array of left-justified boxes with $\lambda_i$ boxes in row $i$ from the top. Given two partitions $\lambda =   (\lambda_1, \ldots, \lambda_{\ell(\lambda)}) \vdash n$ and $\mu = (\mu_1, \ldots, \mu_{\ell(\mu)}) \vdash m$ such that $\ell (\mu) \leq \ell(\lambda)$ and $\mu _i \leq \lambda _i$ for all $1 \leq i \leq \ell(\mu)$ we say that $\mu$ is \emph{contained} in $\lambda$, denoted by $\mu \subseteq \lambda$, and moreover the \emph{skew diagram} $\lambda / \mu$ of \emph{size} $(n-m) =| \lambda / \mu |$ is the array of boxes contained in $\lambda$ but not in $\mu$ when the array of boxes of $\mu$ is positioned in the top-left corner of the array of boxes of $\lambda$.  Given a skew diagram $\lambda / \mu$ we define $(\lambda / \mu)^\ast$ to be the $180^\circ$ antipodal rotation of $\lambda / \mu$, and we say that it is \emph{symmetric} if $\lambda / \mu = (\lambda / \mu)^\ast$. 
	
	We say that a skew diagram is \emph{connected} if each pair of adjacent rows overlap in at least one column, and say that it is a \emph{ribbon} if each pair of adjacent rows overlap in \emph{exactly} one column. \emma{ See Example \ref{EX:rowoverlap} below.}
	\begin{example}\label{EX:rowoverlap}
		The skew diagram for \emma{ $\D = (5,5,5,4,4,2)/(4,4,3,3,1)$} is a ribbon where $|\D|=10$ and is the skew diagram on the left, while $\D^\ast$ is the skew diagram on the right.
		$$\D = \tableau{&&&&\ \\&&&&\ \\&&&\ &\ \\&&&\ \\&\ &\ &\ \\\ &\ } \qquad \D^\ast = \tableau{&&&\ &\ \\ &\ &\ &\ \\&\ \\\ &\ \\\ \\\ }$$
	\end{example}

	How much \tcm{certain sets of} rows overlap will be important for the rest of our paper and so we define \tcm{this} formally now, in addition to often referring to skew diagrams by letters such as $\D$ in order to streamline notation.
	
	\begin{definition}\label{D:rowoverlap}
		Let $\D$ be a skew diagram occupying $r$ rows. For each $k\in \tcm{\{1,\ldots,r\}}$, we define the \emph{$k$-row overlap composition}  to be the (weak) composition
		\begin{align*}
			\alpha^{(k)}(\D) =(r_1^{(k)},\ldots,r_{r-k+1}^{(k)})
		\end{align*}
		where $r_i^{(k)}$ is the number of columns occupied in common by the rows $i,i+1,\ldots,i+k-1$. Let $\lambda^{(k)}(\D)$ be the \emph{$k$-row overlap partition}, that is, the partition determined by rearranging the parts of $\alpha^{(k)}(\D)$ in weakly decreasing order. Furthermore, the \emph{row-length composition} is $\alpha(\D) = \alpha^{(1)}(\D)$, and the \emph{row-length partition} is $\lambda(\D) = \lambda(\alpha(\D))$.
	\end{definition}
	\begin{example}
		\emma{ For the skew diagram $\D$ in Example \ref{EX:rowoverlap}}, 
		note that $\alpha(\D) = \alpha^{(1)}(\D) = (1,1,2,1,3,2)$, $\alpha^{(2)}(\D) = (1,1,1,1,1)$, $\alpha^{(3)}(\D) = \emmaj{(1,0,1,0)}, \alpha^{(4)}(\D) = (0,0,0), \alpha^{(5)}(\D) = (0,0), \alpha^{(6)}(\D) = (0)$ and $\lambda(\D) = (3,2,2,1,1,1)$.
	\end{example}
	
	Observe that for a skew diagram \tcm{$\D$,}
	$$\alpha (\D ^\ast) = (\alpha (\D))^\ast$$and that if $\D$ is a ribbon, then because we know that every pair of adjacent rows overlap in exactly one column we have that $\alpha(\D)$ determines the ribbon exactly, and hence there is a natural bijection between ribbons of size $n$ and compositions of size $n$.
	
	We now move from diagrams to set partitions. Given $[n] = \{1, \ldots, n\}$, a \emph{set partition} $\pi = \pi_1 / \cdots / \pi_l$ of $[n]$ is a family of disjoint nonempty subsets of positive integers $\pi_1, \ldots, \pi_l$ such that $\cup _{i=1}^{l} \pi_i = [n]$, denoted by $\pi \vdash [n]$. We call the $\pi_i$ the \emph{blocks} of $\pi$, call $\ell(\pi)=l$ the \emph{length} of $\pi$, and call $|\pi|=n$ the \emph{size} of $\pi$. We usually list the blocks by increasing least element, omitting set parentheses and commas for ease of legibility, and denote by $\emptyset$ the unique set partition of length and size 0. Note that every set partition $\pi$ determines a partition $\lambda (\pi) = (\lambda(\pi) _1, \ldots , \lambda (\pi)_{\ell(\pi)})$, obtained by listing the cardinalities of the block sizes of $\pi$ in weakly decreasing order, and we set $\lambda (\pi)!=\lambda (\pi)_1!\cdots \lambda (\pi)_{\ell(\pi)}!$.
	
	\begin{example}\label{EX:setpartitions}
		Note that the family of disjoint subsets $\{1\}, \{2,4\}, \{3\}, \{5,6,7\}, \{8,9\}$ is a set partition of $[9]$, and as a set partition $\pi$ we write $$\pi = 1/24/3/567/89 \vdash [9]$$with $\ell(\pi)=5, |\pi| = 9$, $\lambda (\pi) = (3,2,2,1,1)$ and $\lambda (\pi)! = 3!2!2!1!1!=24$.
	\end{example}
	
	Given two set partitions $\pi \vdash [n]$ and $\sigma = \sigma _1 / \cdots / \sigma _{\ell(\sigma)} \vdash [m]$, we say that their \emph{slash product} $\pi \slashp \sigma$ is
	\begin{align}\label{E:slashprod}
		\pi\slashp \sigma=\pi/(\sigma_1+n)/\cdots/(\sigma_{\ell(\sigma)}+n)\vdash [n+m],
	\end{align} where $\sigma _i+n = \{ s+n \suchthat s\in \sigma _i\}$ for $1\leq i \leq \ell(\sigma)$. \tcm{We also say that two set partitions $\pi, \sigma \vdash [n]$ satisfy $\pi\le \sigma$ if $\sigma$ is obtained from $\pi$ by merging blocks of $\pi$.} Returning to compositions, given a composition $\alpha = (\alpha_1, \ldots, \alpha_{\ell(\alpha)})\vDash n$, its corresponding set partition $[\alpha]\vdash [n]$ is
   \emma{
	\begin{align}\label{E:setalp}
		[\alpha]
		&=[\alpha_1]\,\vert\, [\alpha_2]\,\vert \cdots\vert\, [\alpha_{\ell(\alpha)}],
	\end{align}
    where $[\alpha_i]=\{1,\ldots,\alpha_i\}$.}
	We use $[\alpha_i]_{\alpha}$ to denote each block of $[\alpha]$, in order to avoid confusion \emma{ with the set $[\alpha_i]$.}
	
	\begin{example}\label{EX:slashp}
		If $\pi = 1/24/3\vdash [4]$ and $\sigma = 123/45 \vdash [5]$ then $\pi \slashp \sigma = 1/24/3/567/89 \vdash [9]$. Meanwhile if $\alpha = (1,2,1,3,2)$ then 
		$$[\alpha]= [1]\slashp [2] \slashp [1] \slashp [3] \slashp [2]= 1/23/4/567/89 $$with $[\alpha_4]_\alpha =\tcm{\{5,6,7\}}$.
	\end{example}
	
	We now turn our attention to the Hopf algebras of symmetric functions $\Sym$ and symmetric functions in noncommuting variables $\NCSym$, respectively. 
	
	\subsection{\emma{Symmetric functions in commuting variables}}
	The \emph{Hopf algebra of symmetric functions} $\Sym$ is the graded Hopf algebra
	$$\Sym = \Sym ^0 \oplus \Sym ^1 \oplus \cdots \subset \mathbb{Q} [[x_1, x_2, \ldots ]]$$where {$[[\cdot ]]$ means that the variables commute,} $\Sym ^0 = \spam \{1\}$ and the $n$th graded piece for $n\geq 1$ has the following  bases
	$$\begin{array}{rclcl}
		\Sym ^n &=&\spam\{  h_\lambda \suchthat \lambda\vdash n\}&=& \spam\{ s_\lambda \suchthat \lambda\vdash n\}
	\end{array}$$where these functions are defined as follows, given a partition $\lambda = (\lambda_1,\ldots, \lambda_{\ell(\lambda)})\vdash n$.
	
	The \emph{complete homogeneous symmetric function}, $h_\lambda$, is given by
	$$h_\lambda = h_{\lambda _1}\cdots h_{\lambda _{\ell(\lambda)}}$$where {$h_{i} = \sum_{\tcm{j_1\leq\cdots \leq j_{i}}} \tcm{x_{j_1}\cdots x_{j_{i}}}$.}
	
	\begin{example}\label{EX:h}
		$h_{(2,1)} = (x_1x_2+x_1^2+ \cdots)(x_1+x_2+\cdots)$
	\end{example}
	
	For the next basis we begin with two partitions $\lambda = (\lambda_1,\ldots, \lambda_{\ell(\lambda)})$ and $\mu = (\mu_1,\ldots, \mu_{\ell(\mu)})$ such that $\mu \subseteq \lambda$, and define the \emph{Jacobi-Trudi matrix} of the skew diagram $\lambda /\mu$ to be
	\begin{equation}\label{E:JTskew}
		JT(\lambda/\mu) =  \left( h_{\lambda _i  -\mu_j - i + j} \right) _{1\leq i,j \leq \ell(\lambda)} \end{equation}where we set $\mu_j = 0$ for $\ell(\mu) <j \leq \ell(\lambda)$,  $h_0=1$ and \emma{ $h_i=0$ for $i<0$}. Then the \emph{skew Schur function}, $s_{\lambda/\mu}$, is given by
	\begin{equation}\label{E:SSF}s_{\lambda/\mu} = \det	JT(\lambda /\mu)\end{equation}and if $\mu=0$ then the \emph{Schur function}, $s_\lambda$, is given by
	\begin{equation}\label{E:SF}s_{\lambda} = \det	JT(\lambda ).\end{equation}
	
	\begin{example}\label{EX:schurs}
		$s_{(2,1)} = \det \begin{pmatrix}h_2&h_3\\h_0&h_1\end{pmatrix} = h_2h_1-h_3h_0 = h_{(2,1)}-h_{(3)}$
	\end{example}	
	
	In particular, if $\lambda/\mu$ is a ribbon corresponding to the composition $\alpha$, then the \emph {ribbon Schur function}, \tcm{$r_\alpha$,} is given by \cite[Proposition 2.1]{btw:06}
	$$r_\alpha = (-1)^{\ell(\alpha)} \sum _{\beta \succcurlyeq \alpha} (-1) ^{\ell(\beta)} h_\beta.$$

	\begin{example}\label{EX:ribbons}
		$r_{(2,1)}  = h_{(2,1)}-h_{(3)}$
	\end{example}
	
	The Jacobi-Trudi matrix also satisfies some particularly useful properties.
	
	\begin{lemma}\label{L:1}
		Given a skew diagram $\lambda/\mu$, let $\mathcal{A}_{ij}=\lambda_i-\mu_j-i+j$ for any $i,j$. Then for any \tcm{$1\leq i<j\leq \ell(\lambda)$, and $1\leq k<m\leq \ell(\lambda)$,} we have \tcm{that} $\mathcal{A}_{ik}+\mathcal{A}_{jm}=\mathcal{A}_{im}+\mathcal{A}_{jk}$.
	\end{lemma}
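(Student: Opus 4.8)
The plan is to prove the identity by a direct expansion, after first isolating the structural reason it holds: the entry $\mathcal{A}_{ij}$ is \emph{additively separable} in its two indices. Indeed, from the definition we may write
$$\mathcal{A}_{ij} = \lambda_i - \mu_j - i + j = (\lambda_i - i) + (j - \mu_j),$$
so that $\mathcal{A}_{ij} = f(i) + g(j)$ where $f(i) = \lambda_i - i$ depends only on the row index and $g(j) = j - \mu_j$ depends only on the column index.

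Given this, the claimed equality is immediate. First I would compute
$$\mathcal{A}_{ik} + \mathcal{A}_{jm} = \bigl(f(i) + g(k)\bigr) + \bigl(f(j) + g(m)\bigr) = f(i) + f(j) + g(k) + g(m),$$
and likewise
$$\mathcal{A}_{im} + \mathcal{A}_{jk} = \bigl(f(i) + g(m)\bigr) + \bigl(f(j) + g(k)\bigr) = f(i) + f(j) + g(k) + g(m).$$
Since the two right-hand sides agree (the sum no longer records which of the columns $k,m$ is paired with which of the rows $i,j$), we conclude $\mathcal{A}_{ik} + \mathcal{A}_{jm} = \mathcal{A}_{im} + \mathcal{A}_{jk}$, as desired.

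I do not anticipate any genuine obstacle here: the statement is a formal consequence of separability and requires no properties of $\lambda$, $\mu$, or the skew shape $\lambda/\mu$ beyond the defining formula for $\mathcal{A}_{ij}$, and indeed the hypotheses $i<j$ and $k<m$ play no role in the computation. I would nonetheless retain them in the statement, since that is the form in which the lemma is invoked later — namely when reshuffling entries of the Jacobi--Trudi matrix $JT(\lambda/\mu)$ along a chosen pair of rows and a chosen pair of columns — and it makes the downstream applications more transparent.
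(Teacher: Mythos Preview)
Your proof is correct and is essentially the same one-line verification as the paper's: the paper simply notes that both sides equal $\lambda_i+\lambda_j-\mu_k-\mu_m-i+k-j+m$, which is exactly your $f(i)+f(j)+g(k)+g(m)$ with the parentheses dropped. Your observation that $\mathcal{A}_{ij}$ is additively separable is a nice way to phrase why this works, but the underlying computation is identical.
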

	\begin{proof}
		This follows by the fact that both sides equal $\lambda_i+\lambda_j-\mu_k-\mu_m-i+k-j+m$.
	\end{proof}  
	
	\begin{lemma}\cite[Proposition 6.2]{rsw:07}
		\label{L:JTsub}
		Let $\lambda/\mu$ be a skew diagram with $\ell(\mu)<\ell=\ell(\lambda)$.
		\begin{enumerate}
			\item The largest subscript  occurring on any nonzero entry $h_{L}$ in the Jacobi-Trudi matrix $JT(\lambda/\mu)$ is $L=\lambda_1+\ell-1$
			and this subscript occurs exactly once, on the $(1,\ell)$-entry $h_{L}$.
			\item The subscripts on the diagonal entries in $JT(\lambda/\mu)$ are exactly  
			\begin{align*}
				\alpha(\lambda/\mu)=(\lambda_1-\mu_1,\ldots,\lambda_{\ell}-\mu_{\ell})
			\end{align*}
			and the monomial $h_{\lambda_1-\mu_1}\cdots h_{\lambda_{\ell}-\mu_{\ell}}$ occurs in the determinant $s_{\lambda/\mu}$
			\begin{enumerate}
				\item with coefficient $+1$, and
				\item as the monomial whose subscripts rearranged into weakly decreasing order give the smallest partition of $\vert\lambda/\mu\vert$ in dominance order among all nonzero monomials.
			\end{enumerate}    
		\end{enumerate}       
	\end{lemma}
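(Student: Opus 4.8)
For Lemma~\ref{L:JTsub} the plan is to derive all three assertions directly from the arithmetic of the quantities $\mathcal{A}_{ij}=\lambda_i-\mu_j-i+j$ together with the exchange identity of Lemma~\ref{L:1}; no deep input is required. Throughout, $\ell=\ell(\lambda)$, and I take $\lambda/\mu$ reduced, i.e. $\mu_\ell=0$ (the standing convention here); otherwise the value $\lambda_1+\ell-1$ in (1) is not attained.

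For (1), write $\mathcal{A}_{ij}=(\lambda_i-i)+(j-\mu_j)$ and observe that, because $\lambda$ and $\mu$ are partitions, $i\mapsto\lambda_i-i$ is strictly decreasing and $j\mapsto j-\mu_j$ is strictly increasing. Hence $\mathcal{A}_{ij}$ is maximized over all $(i,j)$ uniquely at $(1,\ell)$, with value $(\lambda_1-1)+\ell=\lambda_1+\ell-1$; since this is $\ge0$ the entry $h_{\lambda_1+\ell-1}$ is nonzero, so $\lambda_1+\ell-1$ is also the unique largest subscript among the nonzero entries of $JT(\lambda/\mu)$. For the first half of (2) one simply reads off the diagonal subscripts $\mathcal{A}_{ii}=\lambda_i-\mu_i$, which by definition is $\alpha(\lambda/\mu)$, and notes that in the expansion $\det JT(\lambda/\mu)=\sum_\sigma\operatorname{sgn}(\sigma)\prod_i h_{\mathcal{A}_{i\sigma(i)}}$ the identity permutation contributes $+h_{\lambda_1-\mu_1}\cdots h_{\lambda_\ell-\mu_\ell}$.

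The substance is (2b), which then also gives the ``$+1$'' of (2a). Put $\beta^\sigma=(\mathcal{A}_{1\sigma(1)},\dots,\mathcal{A}_{\ell\sigma(\ell)})$, so $\beta^{\id}=\alpha(\lambda/\mu)$ and the $\sigma$-term of the determinant is $\operatorname{sgn}(\sigma)h_{\beta^\sigma}$; I claim $\lambda(\beta^{\id})$ is the strict minimum, in dominance order, of $\lambda(\beta^\sigma)$ over all $\sigma$ with $h_{\beta^\sigma}\neq0$. I would prove this by induction on the number of inversions of $\sigma$. If $\sigma\neq\id$, pick $i<j$ with $\sigma(i)>\sigma(j)$ and let $\sigma'$ be $\sigma$ with these two values exchanged, so $\sigma'$ has one fewer inversion. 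Passing from $\beta^\sigma$ to $\beta^{\sigma'}$ replaces the pair $\{\mathcal{A}_{i\sigma(i)},\mathcal{A}_{j\sigma(j)}\}$ by $\{\mathcal{A}_{i\sigma(j)},\mathcal{A}_{j\sigma(i)}\}$; by the two monotonicities above, $\mathcal{A}_{i\sigma(i)}$ is the strict maximum and $\mathcal{A}_{j\sigma(j)}$ the strict minimum of these four numbers, while by Lemma~\ref{L:1} the two pairs have equal sum. So $\beta^{\sigma'}$ is obtained from $\beta^\sigma$ by a nonzero ``Robin Hood'' transfer (the two new values lie strictly between the two old ones, with the same sum), whence $\lambda(\beta^{\sigma'})\le\lambda(\beta^\sigma)$ in dominance, strictly since the multiset of parts genuinely changes; note too that the new entries are positive, since they exceed $\mathcal{A}_{j\sigma(j)}\ge0$, so the induction stays among nonzero monomials. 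As $\id$ is the unique inversion-free permutation, (2b) follows, and since no $\sigma\neq\id$ then yields the monomial $h_{\beta^{\id}}$, its coefficient in $\det JT(\lambda/\mu)$ is exactly $\operatorname{sgn}(\id)=+1$.

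The one place I expect to need care is this ``Robin Hood'' comparison: verifying the claimed ordering of $\mathcal{A}_{i\sigma(i)},\mathcal{A}_{j\sigma(j)},\mathcal{A}_{i\sigma(j)},\mathcal{A}_{j\sigma(i)}$ — which is exactly where the partition inequalities $\lambda_{i+1}\le\lambda_i$ and $\mu_{j+1}\le\mu_j$ enter — and invoking the standard majorization fact that moving two parts of a composition toward each other while fixing their sum produces a dominance-smaller (and, if the move is nonzero, strictly smaller) partition. Everything else is bookkeeping.
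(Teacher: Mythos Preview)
The paper does not supply its own proof of Lemma~\ref{L:JTsub}; it simply quotes the statement from \cite[Proposition~6.2]{rsw:07}. So there is no ``paper's proof'' to compare against, and your write-up stands on its own.

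Your argument is correct. The decomposition $\mathcal{A}_{ij}=(\lambda_i-i)+(j-\mu_j)$ with the two strict monotonicities is exactly the right tool for (1), and your inductive descent on inversions for (2b), using Lemma~\ref{L:1} plus the Robin Hood/T-transform majorization fact, is sound: the two replacement values $\mathcal{A}_{i\sigma(j)},\mathcal{A}_{j\sigma(i)}$ do lie strictly between $\mathcal{A}_{j\sigma(j)}$ and $\mathcal{A}_{i\sigma(i)}$ with the same sum, so the sorted sequence strictly decreases in dominance and the nonzero-monomial condition is preserved. One small wording fix: swapping the values at an arbitrary inversion $(i,j)$ need not reduce the inversion count by exactly \emph{one} (it drops by $1+2m$ where $m$ is the number of $k$ with $i<k<j$ and $\sigma(j)<\sigma(k)<\sigma(i)$), but it does drop strictly, which is all your induction needs; alternatively, restrict to an adjacent inversion $(i,i+1)$ and the ``one fewer'' is literally true. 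Your caveat that $\mu_\ell=0$ is needed for the stated value $L=\lambda_1+\ell-1$ is also well placed.
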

	
	\subsection{\emma{Symmetric functions in noncommuting variables}}
	Meanwhile, the \emph{Hopf algebra of symmetric functions in noncommuting variables} $\NCSym$ is the graded Hopf algebra
	$$\NCSym  = \NCSym ^0 \oplus \NCSym ^1 \oplus \cdots \subset \mathbb{Q} \langle \langle x_1, x_2, \ldots \rangle\rangle$$where $\langle \langle\cdot \rangle\rangle$ means that the variables do not commute, $\NCSym ^0 = \spam \{1\}$ and {the} $n$th graded piece for $n\geq 1$ has the following bases
	$$\begin{array}{rclcl}
		\NCSym ^n&=& \spam\{  {h_\pi} \suchthat \pi\vdash [n]\}&=& \spam\{ s_\pi \suchthat \pi\vdash [n]\}
	\end{array}$$where these functions are defined  following \cite{alw:22}, given a set partition $\pi  = \pi _1/\cdots /\pi _{\ell(\pi)}\vdash [n]$.
	
	The \emph{complete homogeneous symmetric function in $\NCSym$}, $h_\pi$, is given by \cite[Lemma 2.14]{alw:22}
	$$
	h_{\pi}=\sum_{\varepsilon} \sum_{(i_1,\ldots,i_n)  } x_{i_{\varepsilon(1)}}\cdots x_{i_{\varepsilon(n)}}
	$$ 
	where
	\begin{enumerate}
		\item   the first sum is over all $\varepsilon\in S_n$ that fixes the blocks of $\pi$,
		\item   the second sum is over all $n$-tuples of positive integers $(i_1,\ldots,i_n)$ such that if $j$ and $k$ are in the same block of $\pi$ with $j<k$, then $i_j\leq i_k$.
	\end{enumerate} 
	
	\begin{example}\label{EX:hpi}
		$h_{13/2}= {2x_1x_1x_1+ x_1x_1x_2+x_2x_1x_1+2x_1x_2x_1}\tcm{+ x_1x_2x_3}+\cdots$
	\end{example}
	
	These functions multiply together in a natural way.
	
	\begin{lemma}\label{L:hmul}\cite[Corollary 2.41]{ber:10}\
		For set partitions $\pi$ and $\sigma$ we have that
		$$h_\pi h_\sigma = h_{\pi \slashp \sigma}.$$
	\end{lemma}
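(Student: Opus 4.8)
The plan is to prove this identity directly from the monomial expansion of $h_\pi$ recalled above, by showing that each monomial of $h_{\pi\slashp\sigma}$ arises, with the correct multiplicity, as a product of a monomial of $h_\pi$ and a monomial of $h_\sigma$. Write $n=|\pi|$ and $m=|\sigma|$. The structural fact driving everything is that, by the definition of the slash product in \eqref{E:slashprod}, the blocks of $\pi\slashp\sigma$ are exactly the blocks of $\pi$, which are subsets of $[n]$, together with the translates $\sigma_i+n$, which are subsets of $\{n+1,\ldots,n+m\}$; in particular no block of $\pi\slashp\sigma$ meets both $[n]$ and its complement in $[n+m]$.

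First I would show that a permutation $\zeta\in S_{n+m}$ fixes every block of $\pi\slashp\sigma$ if and only if $\zeta$ restricts to a permutation $\varepsilon$ of $[n]$ fixing every block of $\pi$ and to a permutation of $\{n+1,\ldots,n+m\}$ that is the translate by $n$ of a permutation $\eta$ of $[m]$ fixing every block of $\sigma$. The only point needing a word is that $\zeta$ cannot move anything out of $[n]$: this holds because $[n]$ is a union of blocks of $\pi\slashp\sigma$, each of which $\zeta$ fixes. This identifies the outer index set of $h_{\pi\slashp\sigma}$ with the product of the outer index sets of $h_\pi$ and $h_\sigma$.

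Next, by the same splitting, an $(n+m)$-tuple $(k_1,\ldots,k_{n+m})$ of positive integers is compatible with $\pi\slashp\sigma$ — meaning $k_a\le k_b$ whenever $a<b$ lie in a common block — if and only if $(k_1,\ldots,k_n)$ is compatible with $\pi$ and the $m$-tuple $(k_{n+1},\ldots,k_{n+m})$ is compatible with $\sigma$, since every such pair lies entirely inside $[n]$ or entirely inside $\{n+1,\ldots,n+m\}$. So the inner sums factor as well. Finally I would match monomials: with $\zeta,\varepsilon,\eta$ as above and setting $i=(k_1,\ldots,k_n)$ and $j=(k_{n+1},\ldots,k_{n+m})$, we have $\zeta(t)=\varepsilon(t)$ for $1\le t\le n$ and $\zeta(t)=n+\eta(t-n)$ for $n<t\le n+m$, so that $x_{k_{\zeta(1)}}\cdots x_{k_{\zeta(n+m)}}=(x_{i_{\varepsilon(1)}}\cdots x_{i_{\varepsilon(n)}})(x_{j_{\eta(1)}}\cdots x_{j_{\eta(m)}})$, the order being respected since the variables do not commute. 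Summing over all admissible pairs $(\zeta,(k_1,\ldots,k_{n+m}))$ on one side, and over all admissible $(\varepsilon,i)$ and $(\eta,j)$ on the other, gives $h_{\pi\slashp\sigma}=h_\pi h_\sigma$ term by term.

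I do not expect a genuine obstacle here: the whole content is that permutations, compatible tuples, and the resulting monomials all decompose along the single splitting $[n+m]=[n]\sqcup\{n+1,\ldots,n+m\}$, and the only care needed is in bookkeeping the shift by $n$. A less computational route would be to quote the multiplicativity framework of \cite{alw:22} directly, but the argument above is self-contained and short.
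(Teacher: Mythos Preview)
The paper does not actually prove this lemma; it is stated with a citation to \cite[Corollary~2.41]{ber:10} and used as a black box. Your proposal supplies a correct, self-contained proof directly from the monomial description of $h_\pi$: the key observation that no block of $\pi\slashp\sigma$ straddles $[n]$ and $\{n+1,\ldots,n+m\}$ is exactly what makes the block-fixing permutations, the compatible tuples, and the resulting noncommutative monomials all factor along the same splitting, and your bookkeeping of the shift by $n$ is accurate. So your argument goes beyond what the paper provides here, and there is nothing to compare against in the paper itself.
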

	
	Hence, much like their counterparts in $\Sym$, given a composition $\alpha =  (\alpha_1, \ldots, \alpha_{\ell(\alpha)}) $ we have that
	$$h_{[\alpha]}=h_{[\alpha_1]}\cdots h_{[\alpha_{\ell(\alpha)}]}.$$We also have that \cite[Section 2]{rs:04} given a permutation $\delta\in S_n$, \emma{ define}
	$$\delta \circ h_\pi = h _{\delta\pi}$$where $\delta$ acts on $\pi$ by sending every element $i$ to $\delta(i)$.
	
	\begin{example}\label{E:deltapi}
		$321\circ h_{12/3} = h_{1/23}$, where the permutation $321\in S_3$ is written in one-line notation.
	\end{example}
	
	For the next basis, much like classical Schur functions, we will again need to compute a determinant, however now it will need to be noncommutative, which we recall as follows \cite[Equation 2.6]{alw:22}. We define the \emph{noncommutative analogue of Leibniz' determinantal formula} for any matrix $A=(a_{ij}) _{1\leq i,j\leq n}$ with noncommuting entries $a_{ij}$ to be
	\begin{equation}\label{E:leibniz}
		\bdet (A) = \sum _{\varepsilon \in S _n} \mathrm{sgn} (\varepsilon) a_{1\varepsilon (1)}\cdots a_{n\varepsilon (n)}
	\end{equation}
	that takes the product of the entries from the top row to the bottom row, and $\mathrm{sgn} (\varepsilon)$ is the sign of the permutation $\varepsilon$. Given two partitions $\lambda = (\lambda_1,\ldots, \lambda_{\ell(\lambda)})$ and $\mu = (\mu_1,\ldots, \mu_{\ell(\mu)})$ such that $\mu \subseteq \lambda$, we define the \emph{noncommutative Jacobi-Trudi matrix} of the skew diagram $\lambda /\mu$ to be \begin{equation}\label{E:noncommJTskew}
		\bJT(\lambda/\mu) =\left(\frac{1}{(\lambda_i-\mu_j-i+j)!}h_{[\lambda_i-\mu_j-i+j]}\right)_{1\leq i,j\leq \ell(\lambda)}\end{equation}where we set $\mu_j = 0$ for $\ell(\mu) <j \leq \ell(\lambda)$,  \emma{ $h_{[0]}= h_\emptyset = 1$ and  $h_{[i]}=0$ for $i<0$}. If $|\lambda/\mu|= n $ and $\delta \in S_n$, then the \emph{skew Schur function in $\NCSym$}, $s_{(\delta, \lambda/\mu)}$, is given by \tcm{\cite[Definition 4.2]{alw:22}}
	\begin{equation}\label{E:noncommSSF}s_{(\delta, \lambda/\mu)} = \delta \circ \bdet\	\bJT(\lambda /\mu).\end{equation}If $\delta = \id$, then we call this the \emph{source skew Schur function in $\NCSym$},  $s_{[\lambda/\mu]}$, hence given by \cite[Definition 3.1]{alw:22}
	\begin{equation}\label{E:noncommsourceSSF}s_{[\lambda/\mu]} =  \bdet\	\bJT(\lambda /\mu) = \bdet \left(\frac{1}{(\lambda_i-\mu_j-i+j)!}h_{[\lambda_i-\mu_j-i+j]}\right)_{1\leq i,j\leq \ell(\lambda)}.\end{equation}
	
	\begin{example}\label{EX:spi}
		The source  Schur function in noncommuting variables $s_{[(2,1)]}$ is
		\begin{align*}s_{[(2,1)]} &  = {\bdet \begin{pmatrix} \frac{1}{2!} h_{[2]}& \frac{1}{3!} h_{[3]}\\
					\frac{1}{0!} h_{[0]}& \frac{1}{1!} h_{[1]}
			\end{pmatrix}} = \bdet \begin{pmatrix} \frac{1}{2!} h_{12}& \frac{1}{3!} h_{123}\\
				\frac{1}{0!} h_\emptyset & \frac{1}{1!} h_{1}
			\end{pmatrix}\\
			&= \frac{1}{2!} h_{12} \frac{1}{1!} h_{1} - \frac{1}{3!} h_{123}\frac{1}{0!} h_\emptyset = {\frac{1}{2} h_{12 \slashp 1} - \frac{1}{6} h_{123}} = \frac{1}{2} h_{12/3} - \frac{1}{6} h_{123}.
		\end{align*}
	\end{example}
	
	Source skew Schur functions have the following property.
	
	\begin{lemma}\label{L:2}
		For any connected skew diagram $\D$ of size $n$,  the source skew Schur function $s_{[\D]}$ is a linear combination of $h_{[\gamma]}$ with $\gamma \vDash n$. 
	\end{lemma}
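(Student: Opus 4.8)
The plan is to expand the noncommutative determinant defining $s_{[\D]}$ directly via Leibniz' formula \eqref{E:leibniz} and to recognize each surviving term as a rational multiple of a single $h_{[\gamma]}$. Write $\D=\lambda/\mu$ with $\ell=\ell(\lambda)$ and, as in Lemma~\ref{L:1}, set $\mathcal{A}_{ij}=\lambda_i-\mu_j-i+j$ (with $\mu_j=0$ for $j>\ell(\mu)$). Combining \eqref{E:noncommsourceSSF} and \eqref{E:leibniz},
$$s_{[\D]}=\sum_{\varepsilon\in S_\ell}\mathrm{sgn}(\varepsilon)\prod_{i=1}^{\ell}\frac{1}{\mathcal{A}_{i\varepsilon(i)}!}\,h_{[\mathcal{A}_{i\varepsilon(i)}]},$$
where, by the conventions of \eqref{E:noncommJTskew}, a summand vanishes as soon as some $\mathcal{A}_{i\varepsilon(i)}<0$. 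So it suffices to analyze a single permutation $\varepsilon$ for which $a_i:=\mathcal{A}_{i\varepsilon(i)}\ge 0$ for all $i$.

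For such an $\varepsilon$, repeated application of Lemma~\ref{L:hmul}, together with the fact that $h_{[0]}=h_\emptyset=1$ is the unit and that $[0]=\emptyset$ is absorbed by the slash product ($[a]\slashp\emptyset=\emptyset\slashp[a]=[a]$), gives
$$\prod_{i=1}^{\ell}h_{[a_i]}=h_{[a_1]\slashp[a_2]\slashp\cdots\slashp[a_\ell]}=h_{[\gamma]},$$
where $\gamma=\gamma(\varepsilon)$ is the composition obtained from $(a_1,\ldots,a_\ell)$ by deleting its zero parts, i.e. listing the positive $a_i$ in increasing order of $i$ (compare \eqref{E:setalp}). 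Moreover $\gamma\vDash n$, since
$$\sum_{i=1}^{\ell}a_i=\sum_{i=1}^{\ell}\bigl(\lambda_i-\mu_{\varepsilon(i)}-i+\varepsilon(i)\bigr)=|\lambda|-|\mu|=|\D|=n,$$
using $\sum_i i=\sum_i\varepsilon(i)$. Hence each surviving summand equals $\mathrm{sgn}(\varepsilon)\bigl(\prod_i a_i!\bigr)^{-1}h_{[\gamma(\varepsilon)]}$ with $\gamma(\varepsilon)\vDash n$, and summing over $\varepsilon$ (collecting coefficients of coincident $\gamma$'s) exhibits $s_{[\D]}$ as a $\mathbb{Q}$-linear combination of the $h_{[\gamma]}$, $\gamma\vDash n$.

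I do not expect a genuine obstacle: this is a bookkeeping argument resting on the multiplicativity of the $h_\pi$ (Lemma~\ref{L:hmul}) and the definition \eqref{E:noncommsourceSSF}. The only points needing care are the handling of zero and negative subscripts in the noncommutative Jacobi--Trudi matrix and the check that the subscripts along each surviving ``diagonal'' of the Leibniz expansion sum to $n$; connectedness of $\D$ is not used for this statement and is kept only as the standing hypothesis of the surrounding results.
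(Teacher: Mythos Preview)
Your argument is correct and follows essentially the same route as the paper: expand $\bdet\,\bJT(\lambda/\mu)$ via Leibniz' formula, use the multiplicativity $h_{[a_1]}\cdots h_{[a_\ell]}=h_{[a_1]\slashp\cdots\slashp[a_\ell]}$ from Lemma~\ref{L:hmul}, and verify $\sum_i a_i=n$ via the permutation identity $\sum_i i=\sum_i \varepsilon(i)$. If anything, you are slightly more explicit than the paper about discarding negative subscripts and absorbing the $h_{[0]}=1$ factors so that the resulting $\gamma$ is an honest composition rather than a weak one; your remark that connectedness is not actually needed here is also correct.
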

	
	\begin{proof}
		Let $\D=\lambda/\mu$ with $|\lambda/\mu|=n$ \emma{ and $\ell=\ell(\lambda)$. Recall that $\mathcal{A}_{ij}=\lambda_i-\mu_{j}-i+j$. For any $w\in S_{\ell}$, define $w_{\A}=(\A_{1\,w(1)},\ldots, \A_{\ell\,w(\ell)})$ as a sequence of integers.}
		Then the determinantal formula of $s_{[\lambda/\mu]}$ gives that
		\emma{
			\begin{align}
				\nonumber s_{[\lambda/\mu]}&=\bdet\left((\lambda_i-\mu_j-i+j)!^{-1} h_{[\lambda_i-\mu_j-i+j]}\right)_{1\leq i,j \leq \ell}\\
				\label{E:skewsource}&=\sum_{w\in S_{\ell}}\mathrm{sgn}(w)\left(\prod_{i=1}^{\ell}(\A_{i\,w(i)}!)^{-1}\right)
				h_{[w_{\A}]}.
		\end{align}}
		\emma{ Only a nonzero $h_{[w_{\A}]}$ contributes to the sum and note that $h_{[w_{\A}]}\ne 0$ if and only if $\A_{i\,w(i)}\ge 0$ for all $i$, i.e., $w_{\A}$ is  a weak composition. Removing zeros from $w_{\A}$ produces a composition, say $\gamma$. Now by construction $\gamma\vDash \A_{1\,w(1)}+\cdots+ \A_{\ell\,w(\ell)}$ and
			$$\sum _{i=1}^{\ell} \A_{i\,w(i)} =\sum _{i=1}^{\ell} (\lambda_i-\mu_{w(i)}-i+w(i))= \sum _{i=1}^{\ell} (\lambda_i-\mu_i) =n$$}
		as desired, because $w$ is a permutation.
	\end{proof}
	
	Given $\pi \vdash [n]$ let us arrange the blocks such that reading from left to right
	\begin{enumerate}
		\item  block sizes weakly decrease, 
		\item smallest elements of blocks of the same size are strictly increasing, \emmaj{and}
		\item \emmaj{elements of each block are strictly increasing.}
	\end{enumerate} 
	Let $\delta _\pi$ be the permutation in one-line notation obtained by removing the slashes from $\pi$ with the blocks arranged as just described. \emma{ An important point to note is that the above ordering of blocks is different from the one by increasing least element of each block in Subsection \ref{ss:21}. We distinguish them by indexing permutations exclusively via the above ordering, that is an arrangement of blocks according to \emmaj{(1)--(3)}.}

	Then the \emph{(standard) Schur function in $\NCSym$}, $s_\pi$, is given by \cite[Definition 4.2]{alw:22}
	\begin{equation}\label{E:noncommSF}s_{\pi} = \delta_\pi \circ s_{[\lambda(\pi)]} = \delta _\pi \circ \bdet \left(\frac{1}{(\lambda(\pi)_i-i+j)!}h_{[\lambda(\pi)_i-i+j]}\right)_{1\leq i,j\leq \ell(\pi)}.\end{equation}
	
	\begin{example}
		\emma{ For the source Schur function $s_{[(2,1)]}$ in Example \ref{EX:spi}, \emmaj{(\ref{E:noncommSSF}) gives that} $$s_{(213,(2,1))}=213\circ s_{[(2,1)]}=213 \circ \left( \frac{1}{2} h_{12/3} - \frac{1}{6} h_{123}\right)= \frac{1}{2} h_{12/3} - \frac{1}{6} h_{123}.$$} 
		If $\pi = 13/2$, then $\delta _\pi = 132 \in S_3$ in one-line notation and so
		\emmaj{(\ref{E:noncommSF}) says that} $$s_{13/2} = 132 \circ s_{[(2,1)]} = 132 \circ \left( \frac{1}{2} h_{12/3} - \frac{1}{6} h_{123}\right) = \frac{1}{2} h_{13/2} - \frac{1}{6} h_{123}.$$
		Meanwhile the  skew Schur function in noncommuting variables $s_{(321, \tcm{(2,2)/(1)})}$, where $321\in S_3$ is in one-line notation, is
		\begin{align*}s_{(321, (2,2)/(1))} &=  321 \circ {\bdet \begin{pmatrix} \frac{1}{1!} h_{[1]}& \frac{1}{3!} h_{[3]}\\
					\frac{1}{0!} h_{[0]}& \frac{1}{2!} h_{[2]}
			\end{pmatrix}} =  321 \circ \bdet \begin{pmatrix} \frac{1}{1!} h_{1}& \frac{1}{3!} h_{123}\\
				\frac{1}{0!} h_\emptyset& \frac{1}{2!} h_{12}
			\end{pmatrix}\\
			&= 321 \circ \left( \frac{1}{1!} h_{1}\frac{1}{2!} h_{12}  - \frac{1}{3!} h_{123}\frac{1}{0!} h_\emptyset \right) = 321 \circ \left( {\frac{1}{2} h_{1\slashp 12} - \frac{1}{6} h_{123}} \right)\\
			&= 321 \circ \left( {\frac{1}{2} h_{1/ 23} - \frac{1}{6} h_{123}} \right) = \frac{1}{2} h_{12/3} - \frac{1}{6} h_{123}.
		\end{align*}Our classification will explain why \emma{$s_{(213,(2,1))} = s_{(321, (2,2)/(1))} $} later.
	\end{example}
	
	In particular, if $\lambda/\mu$ is a ribbon corresponding to a composition $\alpha$, then the \emph{ribbon Schur function in $\NCSym$}, $r_{[\alpha]}$, is given by \cite[Corollary 6.4]{alw:22}
	\begin{align}\label{E:ribbformula}
		r_{[\alpha]} = (-1)^{\ell(\alpha)} \sum _{\beta \succcurlyeq \alpha} (-1)^{\ell(\beta)}\tcm{\frac{h_{[\beta]}}{\beta!}.}
	\end{align}
	
	\begin{example}\label{EX:noncommralpha} $r_{[(2,1)]}= \frac{1}{2} h_{[(2,1)]}-\frac{1}{6}h_{[(3)]}$
	\end{example}

	Connecting our two Hopf algebras $\Sym$ and $\NCSym$ is the homomorphism 
	$$\rho:\mathbb{Q}\langle\langle x_1,x_2,\ldots\rangle\rangle\rightarrow
	\mathbb{Q}[[x_1,x_2,\ldots]]$$that lets the variables commute, and explicitly relates the functions we have met as follows.
	
	\begin{lemma}\label{L:rho}
		\
		
		\begin{enumerate}
			\item \cite[Theorem 2.1]{rs:04} $\rho(h_\pi) = \lambda(\pi)! h_{\lambda(\pi)}$
			\item \cite[Lemma 4.4]{alw:22} $\rho(s_{(\delta,\lambda/\mu)})=s_{\lambda/\mu}$
			\item \cite[Corollary 6.4]{alw:22} $\rho (r_{[\alpha]}) = r_\alpha$
		\end{enumerate}		
	\end{lemma}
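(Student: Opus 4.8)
The plan is to prove the three parts in order, with part~(1) carrying essentially all of the content and parts~(2) and~(3) then following by formal manipulation. For part~(1), I would argue directly from the monomial expansion of $h_\pi$ recalled above. Since $\rho$ makes the variables commute, each monomial $x_{i_{\varepsilon(1)}}\cdots x_{i_{\varepsilon(n)}}$ collapses to $x_{i_1}\cdots x_{i_n}$ regardless of $\varepsilon$, so the outer sum over block-fixing $\varepsilon\in S_n$ contributes exactly the scalar factor $\prod_k|\pi_k|!=\lambda(\pi)!$. What remains is $\sum x_{i_1}\cdots x_{i_n}$ taken over $n$-tuples $(i_1,\dots,i_n)$ that are weakly increasing along each block of $\pi$; because the constraints imposed by distinct blocks are independent, this sum factors over the blocks of $\pi$, a block of size $m$ contributing $\sum_{c_1\le\cdots\le c_m}x_{c_1}\cdots x_{c_m}=h_m$. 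Hence the product over blocks is $h_{\lambda(\pi)}$ and $\rho(h_\pi)=\lambda(\pi)!\,h_{\lambda(\pi)}$.

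Two consequences of part~(1) will drive the rest. First, $\rho$ is constant on $S_n$-orbits inside $\NCSym^n$: since $\delta\circ h_\pi=h_{\delta\pi}$ and $\delta\pi$ has the same multiset of block sizes as $\pi$, part~(1) gives $\rho(\delta\circ h_\pi)=\rho(h_\pi)$, and this extends by linearity. Second, since $\rho$ is an algebra homomorphism, applying part~(1) blockwise gives $\rho(h_{[\beta]})=\prod_i\rho(h_{[\beta_i]})=\beta!\,h_\beta$ for every (weak) composition $\beta$, with the usual conventions $0!=1$ and $h_0=1$.

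For part~(2), the orbit-invariance lets us discard $\delta$, so $\rho(s_{(\delta,\lambda/\mu)})=\rho\bigl(\delta\circ\bdet\bJT(\lambda/\mu)\bigr)=\rho(s_{[\lambda/\mu]})$. Now apply $\rho$ to the expansion \eqref{E:skewsource} of $s_{[\lambda/\mu]}$ established in Lemma~\ref{L:2} and use $\rho(h_{[\beta]})=\beta!\,h_\beta$ to cancel every factorial:
\[
\rho(s_{[\lambda/\mu]})=\sum_{w\in S_{\ell(\lambda)}}\mathrm{sgn}(w)\prod_{i=1}^{\ell(\lambda)}h_{\lambda_i-\mu_{w(i)}-i+w(i)}=\det\bigl(h_{\lambda_i-\mu_j-i+j}\bigr)_{1\le i,j\le\ell(\lambda)}=\det JT(\lambda/\mu)=s_{\lambda/\mu},
\]
where the second equality is just the Leibniz formula for the ordinary determinant. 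This proves part~(2), and part~(3) is then the special case in which $\lambda/\mu$ is the ribbon of $\alpha$; alternatively, one applies $\rho$ termwise to \eqref{E:ribbformula} and uses $\rho(h_{[\beta]}/\beta!)=h_\beta$ to obtain $\rho(r_{[\alpha]})=(-1)^{\ell(\alpha)}\sum_{\beta\succcurlyeq\alpha}(-1)^{\ell(\beta)}h_\beta=r_\alpha$.

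The only place any real care is needed is part~(1): one must check that there are exactly $\lambda(\pi)!$ block-fixing permutations and that the constrained monomial sum genuinely factors across blocks into $h_{\lambda(\pi)}$. Both are straightforward, and once part~(1) is in hand the remaining steps are routine bookkeeping with the determinantal expansion and the $S_n$-action, so I do not anticipate a serious obstacle.
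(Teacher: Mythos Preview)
Your argument is correct. The paper itself does not prove Lemma~\ref{L:rho}; it simply records these three identities as quotations from the literature (Rosas--Sagan for~(1) and Aliniaeifard--Li--van Willigenburg for~(2) and~(3)), so there is no in-paper proof to compare against. Your self-contained derivation---computing $\rho(h_\pi)$ directly from the monomial description, then using the resulting identity $\rho(h_{[\beta]})=\beta!\,h_\beta$ together with $S_n$-orbit invariance of $\rho$ to collapse the noncommutative Jacobi--Trudi determinant to the classical one---is exactly the natural route and recovers all three statements cleanly.
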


	As a direct consequence, it follows that necessary conditions for the equality of two skew Schur functions are also necessary conditions for the equality of two skew Schur functions in $\NCSym$, and so one ingredient for the proof of our classification is the following necessary condition, and it is in fact the final ingredient necessary to prove our classification.
	
	\begin{lemma}\cite[Corollary 8.11]{rsw:07} \label{L:overlap}
		For two skew diagrams $\D$ and $\T$, if $s_{\D}=s_{\T}$, then $\D$ and $\T$ must have the same $k$-row overlap partitions for all $k$.
	\end{lemma}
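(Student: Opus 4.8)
The plan is to recover each $k$-row overlap partition $\lambda^{(k)}(\D)$ directly from the expansion of the ordinary skew Schur function $s_\D$ in the $h$-basis; this is legitimate because $\{h_\nu : \nu\vdash n\}$ is a basis of $\Sym^n$, so $s_\D=s_\T$ forces these two $h$-expansions to coincide. For $k=1$ this is immediate from Lemma~\ref{L:JTsub}(2)(b): among all partitions $\nu$ for which $h_\nu$ occurs with nonzero coefficient in $s_\D=\det JT(\D)$ there is a dominance-smallest one, it equals $\lambda^{(1)}(\D)=\lambda(\D)$, and its coefficient is $+1$. Hence $s_\D=s_\T$ already gives $\lambda^{(1)}(\D)=\lambda^{(1)}(\T)$.

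For $k\ge 2$ I would reduce to the case $k=1$ by a device that does not depend on $\D$. To each connected skew diagram $\D$ attach a second skew diagram $\D^{\langle k\rangle}$ whose row-length partition equals $\lambda^{(k)}(\D)$ — most simply the ribbon whose composition is $\alpha^{(k)}(\D)$ with all zero parts deleted, so that $\lambda^{(1)}(\D^{\langle k\rangle})=\lambda(\alpha^{(k)}(\D))=\lambda^{(k)}(\D)$ by Definition~\ref{D:rowoverlap}. The key claim to establish is the existence of a $\mathbb{Q}$-linear operator $\Phi_k$ on $\Sym$, the same for every $\D$, with $\Phi_k(s_\D)=s_{\D^{\langle k\rangle}}$. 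Granting this, the lemma is formal: $s_\D=s_\T$ gives $s_{\D^{\langle k\rangle}}=\Phi_k(s_\D)=\Phi_k(s_\T)=s_{\T^{\langle k\rangle}}$, and applying the already-proved $k=1$ case to $\D^{\langle k\rangle}$ and $\T^{\langle k\rangle}$ yields $\lambda^{(k)}(\D)=\lambda^{(1)}(\D^{\langle k\rangle})=\lambda^{(1)}(\T^{\langle k\rangle})=\lambda^{(k)}(\T)$.

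Constructing $\Phi_k$ is the crux, and the step I expect to be the main obstacle. The approach would be to expand both $s_\D$ and $s_{\D^{\langle k\rangle}}$ in the $h$-basis via the Jacobi--Trudi determinant \eqref{E:SSF}, exploiting the additive structure of its entries recorded in Lemma~\ref{L:1} (and, where convenient, the ribbon $h$-expansion together with Hamel--Goulden-type outside-decomposition identities), and then verify that the passage from the first expansion to the second is governed by a single rule on $h$-monomials that depends only on $k$: morally, ``delete every column of $\D$ of height less than $k$ and shorten each surviving column by $k-1$ rows''. The delicate points are to turn this informal recipe into a bona fide linear operator, and in particular to make it behave correctly for those $\D$ for which $\alpha^{(k)}(\D)$ has interior zero parts — then the column-shortened shape falls apart into several pieces, and the naive shifted shape $(\lambda_k,\dots,\lambda_r)/(\mu_1,\dots,\mu_{r-k+1})$ need not even be a legitimate skew diagram, so one must instead use a disjoint union of skew shapes and the multiplicativity of $s$ over disjoint unions. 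Once the rule is pinned down, its independence of $\D$ is precisely the well-definedness of $\Phi_k$, and the argument closes via the formal reduction above.
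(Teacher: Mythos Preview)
The paper does not prove this lemma; it is quoted from \cite[Corollary~8.11]{rsw:07} and used as a black box, so there is no in-paper argument to compare against.

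On the merits of your proposal: the $k=1$ step via Lemma~\ref{L:JTsub}(2)(b) is correct, but for $k\ge 2$ there is a genuine gap. Everything rests on the operator $\Phi_k$, and you do not construct it---you explicitly call it ``the crux'' and ``the main obstacle'' and leave it as a plan. As written this is a strategy, not a proof. Worse, your particular choice of $\D^{\langle k\rangle}$ as the ribbon on the composition $\alpha^{(k)}(\D)$ (zeros deleted) threatens to make $\Phi_k$ ill-defined before you even start: two shapes with $s_\D=s_\T$ are only guaranteed (by the very statement you are proving) to share the \emph{partition} $\lambda^{(k)}$, not the \emph{composition} $\alpha^{(k)}$, and by \cite{btw:06} two ribbons with the same underlying partition but unrelated compositions generally have different ribbon Schur functions. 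So for a generic pair $\D,\T$ with $s_\D=s_\T$ there is no a~priori reason that $s_{\D^{\langle k\rangle}}=s_{\T^{\langle k\rangle}}$, and hence no linear $\Phi_k$ satisfying $\Phi_k(s_\D)=s_{\D^{\langle k\rangle}}$ can exist unless you have already proved something at least as strong as the lemma. Replacing the ribbon by the straight shape $\lambda^{(k)}(\D)$ removes this circularity in the target but still does not supply the operator.

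For orientation, the argument in \cite{rsw:07} does not build a separate operator for each $k$. It first shows combinatorially that the family $\{\lambda^{(k)}(\D)\}_k$ of row-overlap partitions and the family of column-overlap partitions determine one another, and then recovers the whole family at once from the support of $s_\D$ in the Schur (equivalently $h$- or $e$-) basis by repeated dominance-extremum arguments of the Lemma~\ref{L:JTsub} type, peeling off the data layer by layer rather than via a single linear map.
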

	
	\emma{ Another application of the necessary conditions for the equality of two skew Schur functions is 
		the reduction described in Section \ref{S:intro}, which states that the characterization of equality of two skew Schur functions in $\NCSym$ is reduced to determining the equality of two skew Schur functions in $\NCSym$ corresponding to connected skew diagrams. This is true by noting the following necessary condition.
	\begin{lemma}\cite[Section 6]{rsw:07}\cite[Proposition 3.2]{mw:09}
	Understanding the equality $s_{\D}=s_{\T}$ for all skew diagrams $\D$ and $\T$ is equivalent to 
	understanding $s_{\D}=s_{\T}$ for connected skew diagrams $\D$ and $\T$.
	\end{lemma}}
	
	\section{Proof of our main result}\label{S:proof}
	
	We are now ready to prove our main result, in which we classify when two skew Schur functions in $\NCSym$ are equal, 
	recalling that a skew diagram $\D$ has its $180^\circ$ antipodal rotation denoted by $\D ^\ast$, is  nonsymmetric if $\D \neq \D^\ast$, and is a ribbon \tcm{if} each pair of adjacent rows overlap in exactly one column. 	
	\begin{theorem}\label{T:classification} Given two connected skew diagrams $\D$ and $\T$ such that $\D\ne \T$, we have that the skew Schur functions in $\NCSym$ $$s_{(\delta, \D)} =s_{(\tcm{\tau},\T)}$$
		if and only if 
		\begin{enumerate}
			\item $\D$ is a nonsymmetric ribbon and
			\item $\T=\D^*$, and
			\item the bijection $\tdr : j\mapsto n+1-\td(j)$ preserves each block of the set partition $[\alpha]$ where $n=\vert \D\vert$ and $\alpha$ is the  row-length composition of $\D$.
		\end{enumerate}
	\end{theorem}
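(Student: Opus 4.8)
The plan is to reduce the whole statement to the $S_n$-action on the noncommutative ribbon Schur function $r_{[\alpha]}$, together with a few structural facts about the expansion \eqref{E:skewsource}. Throughout, $\alpha=\alpha(\D)$, $n=|\D|$, and $w_0\in S_n$ denotes the longest element $w_0(k)=n+1-k$, so that $w_0[\beta]=[\beta^*]$ for every composition $\beta\vDash n$.

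\emph{The easy direction.} Assume (1)--(3). Since $s_{[\D]}=r_{[\alpha]}$ and $s_{[\D^*]}=r_{[\alpha^*]}$, I would first establish the identity $r_{[\alpha^*]}=w_0\circ r_{[\alpha]}$, which follows from \eqref{E:ribbformula} and the size- and length-preserving bijection $\beta\mapsto\beta^*$ between coarsenings of $\alpha$ and of $\alpha^*$ together with $w_0[\beta]=[\beta^*]$. Then $s_{(\delta,\D)}=\delta\circ r_{[\alpha]}$ while $s_{(\tau,\D^*)}=\tau\circ r_{[\alpha^*]}=(\tau w_0)\circ r_{[\alpha]}$, so the claimed equality is equivalent to $(\delta^{-1}\tau w_0)\circ r_{[\alpha]}=r_{[\alpha]}$, and one computes $(\delta^{-1}\tau w_0)^{-1}=w_0\tau^{-1}\delta=\tdr$. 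It remains to invoke the "easy half'' of a \emph{stabiliser lemma}: if $g\in S_n$ fixes each block of $[\alpha]$ setwise then $g\circ r_{[\alpha]}=r_{[\alpha]}$; this is immediate from \eqref{E:ribbformula}, since each $[\beta]$ with $\beta\succcurlyeq\alpha$ is a union of blocks of $[\alpha]$. By (3), $\tdr$, and hence $\tdr^{-1}=\delta^{-1}\tau w_0$, fixes each block of $[\alpha]$, which finishes this direction.

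\emph{The hard direction, reductions.} Now suppose $s_{(\delta,\D)}=s_{(\tau,\T)}$ with $\D\ne\T$ and $n=|\D|=|\T|$. Applying $\rho$ and Lemma~\ref{L:rho}(2) gives $s_\D=s_\T$, so Lemma~\ref{L:overlap} yields $\lambda^{(k)}(\D)=\lambda^{(k)}(\T)$ for all $k$; in particular $\lambda(\D)=\lambda(\T)$ and $\lambda^{(2)}(\D)=\lambda^{(2)}(\T)$. Writing $\D=\lambda/\mu$ and $\mathcal{A}_{ij}=(\lambda_i-i)-(\mu_j-j)$, \eqref{E:skewsource} reads $s_{[\D]}=\sum_{w}\mathrm{sgn}(w)\,w(\alpha)!^{-1}h_{[w(\alpha)]}$ with $w(\alpha)_i=\mathcal{A}_{i,w(i)}$. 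One checks $\sum_{i\le k}\mathcal{A}_{i,w(i)}\ge\sum_{i\le k}\alpha_i$ for all $k$ (using $\mu$ weakly decreasing and $w$ a permutation), and, since $i\mapsto\lambda_i-i$ and $j\mapsto\mu_j-j$ are strictly decreasing, the rearrangement inequality gives $\sum_i\mathcal{A}_{i,w(i)}^2>\sum_i\mathcal{A}_{ii}^2$ for $w\ne\id$. Together these force the multiset of nonzero parts of $w(\alpha)$, for $w\ne\id$, to strictly dominate $\lambda(\D)$, hence to differ from it. Thus $s_{(\delta,\D)}$ has a \emph{unique} term whose set partition has block sizes equal to the dominance-minimal partition $\lambda(\D)$, namely $\alpha(\D)!^{-1}h_{\delta[\alpha(\D)]}$; comparing with the analogous term of $s_{(\tau,\T)}$ gives $\lambda(\D)=\lambda(\T)$ and $\delta[\alpha(\D)]=\tau[\alpha(\T)]=:\pi_0$.

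\emph{The hard direction, $\D$ is a ribbon.} I expect this to be the main obstacle. The cheap half is a dichotomy: for connected $\D$ the single-block set partition $\{[n]\}$ lies in the support of $s_{[\D]}$ exactly when $\D$ is a ribbon (because $\mathcal{A}_{ij}\le\lambda_1+\ell(\lambda)-1$, and a summand equal to $n$ forces all row overlaps to be $1$), and as $\delta$ fixes $\{[n]\}$ this gives that $\D$ is a ribbon iff $\T$ is. The genuinely delicate point is to exclude the case that $\D$ and $\T$ are both non-ribbons; this is where $\D\ne\T$ must enter. When $\D$ is a connected non-ribbon, the support of $s_{[\D]}$ has more than one $\ell(\lambda)$-block set partition (an adjacent transposition at a pair of rows overlapping in $\ge2$ columns contributes a second, besides $[\alpha]$), and I would analyse the refinement-order isomorphism of supports induced by $\delta^{-1}\tau$ (note $\delta^{-1}\tau[\alpha(\T)]=[\alpha(\D)]$) to force the blocks of $[\alpha(\T)]$ onto those of $[\alpha(\D)]$ by the identity or the reversal, hence $\D=\T$ (excluded) or $\D=\T^*$; the crux is then to rule out $\D=\T^*$ for a non-ribbon $\T$, using that the extra $\ell$-block, non-coarsening part of a ribbon-free source skew Schur function is "transposed'' under antipodal rotation and cannot be matched by any single permutation.

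\emph{The hard direction, conclusion.} Once $\D,\T$ are ribbons with compositions $\alpha,\gamma$ (and $\lambda(\alpha)=\lambda(\gamma)$): by \eqref{E:ribbformula} the support of $s_{(\delta,\D)}$, ordered by refinement, is a Boolean lattice with bottom $\pi_0$, top $\{[n]\}$, and coatoms $\delta[(p,n-p)]$, $p$ ranging over the partial sums of $\alpha$, with coefficient $(-1)^{\ell(\alpha)}/\bigl(p!\,(n-p)!\bigr)$. Matching coatoms and coefficients with those of $s_{(\tau,\T)}$ — using that $x\mapsto x!\,(n-x)!$ is unimodal, hence injective up to $x\leftrightarrow n-x$, and that $\pi_0=\delta[\alpha]=\tau[\gamma]$ provides two orderings of the blocks of $\pi_0$ differing by a permutation $\pi$ each of whose initial segments is an initial or a final segment of $\{1,\dots,\ell(\alpha)\}$ — forces $\pi$ to be the identity or the reversal, so $\gamma=\alpha$ or $\gamma=\alpha^*$; as $\D\ne\T$ we get $\T=\D^*$ and $\D$ nonsymmetric, establishing (1),(2). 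Finally $\delta[\alpha]=\tau[\alpha^*]=(\tau w_0)[\alpha]$ turns the hypothesis into $\tdr^{-1}\circ r_{[\alpha]}=r_{[\alpha]}$, and the "only if'' half of the stabiliser lemma gives (3): here $[\alpha]$ is the unique support element of $r_{[\alpha]}$ with $\ell(\alpha)$ blocks so $g:=\tdr^{-1}$ satisfies $g[\alpha]=[\alpha]$, and demanding in addition that $g$ fix every single-merge coarsening of $[\alpha]$ forces $g$ to fix each block of $[\alpha]$ (the block-reversal alternative is excluded by $\alpha\ne\alpha^*$), hence so does $\tdr$.
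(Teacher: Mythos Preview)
Your easy direction is essentially the paper's argument rephrased through $w_0$, and your ribbon-case conclusion (matching coatom coefficients in the Boolean lattice of the support of $r_{[\alpha]}$) is a reasonable, if slightly different, route to (1)--(3) once you know both shapes are ribbons.

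The genuine gap is the non-ribbon case, which you yourself flag as ``the main obstacle'' and then do not actually prove. Two concrete problems. First, even granting that $\delta^{-1}\tau$ induces a refinement-order isomorphism of supports sending $[\alpha(\T)]$ to $[\alpha(\D)]$, and even granting that this forces the blocks of the two bottom elements to correspond by the identity or the reversal, you would only conclude $\alpha(\T)=\alpha(\D)$ or $\alpha(\T)=\alpha(\D)^*$; for non-ribbons the row-length composition does \emph{not} determine the shape, so ``hence $\D=\T$ or $\D=\T^*$'' is false as stated. Second, the sentence about the ``extra $\ell$-block, non-coarsening part \ldots\ being `transposed' under antipodal rotation and cannot be matched by any single permutation'' is an intention, not an argument: you have not identified which terms these are, why they survive cancellation, or why no permutation can align them.

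The paper avoids this whole detour. It works directly with $(\tau^{-1}\delta)\circ s_{[\D]}=s_{[\T]}$ and uses Lemma~\ref{L:2} to say every term on both sides is an $h_{[\cdot]}$ indexed by a set partition of interval type. Looking only at the identity term $[\alpha]$ and each adjacent-transposition term $[\gamma]$, the containment $[\A_{ii}]_\alpha\subsetneq[\A_{i\,i+1}]_\gamma\subseteq[\A_{ii}]_\alpha\cup[\A_{i+1\,i+1}]_\alpha$ forces the images under $\tau^{-1}\delta$ of any two consecutive blocks of $[\alpha]$ to be consecutive intervals. This immediately gives a dichotomy: either $\tau^{-1}\delta$ fixes every block of $[\alpha]$, or $\overline{\tau^{-1}\delta}$ does. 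In the first branch a short comparison of diagonal and subdiagonal subscripts (via Lemmas~\ref{L:JTsub} and~\ref{L:overlap}) yields $JT(\D)=JT(\T)$, hence $\D=\T$, which is excluded. In the second branch one shows $\A_{1\ell}=n$, whence all subdiagonal entries of $JT(\D)$ vanish, so $\D$ is a ribbon and $\T=\D^*$. This handles ribbons and non-ribbons in one stroke and never needs the incomplete support/coatom analysis you propose for the non-ribbon case.
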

\emmaj{An equivalent statement of condition (3) is that the bijection $\td$ sends each block $[\alpha_i]_{\alpha}$ of $[\alpha]$ to the block $[\alpha^*_{\ell(\alpha)-i+1}]_{\alpha^*}$ of $[\alpha^*]$ for all $1\le i\le \ell(\alpha)$. See the example below.
	\begin{example}
		Let $\D$ be the ribbon with row-length composition $\alpha=\alpha(\D)=(3,2,1,5,2)$, then $n=|\D|=13$,  $[\alpha]=123/45/6/7\,8\,9\,10\,11/12\,13$ and $\ell(\alpha)=5$. 
		According to condition (2) of Theorem \ref{T:classification}, $\T=\D^*$, $\alpha(\T)=\alpha^*=(2,5,1,2,3)$ and $[\alpha^*]=12/34567/8/9\,10/11\,12\,13$. 
		Let $\tdr=3125469\,11\,10\,7\,8\,13\,12\in S_{13}$ in one-line notation,  then clearly $\tdr$ has property (3), which is true if and only if $\td=11\,13\,12\,9\,10\,85347612$ transforms the sequence of blocks $(123,45,6,7\,8\,9\,10\,11,12\,13)$ to $(11\,12\,13,9\,10,8,34567,12)$.
	\end{example}}
	\begin{proof}    
		In order to make the notation in our proof easier to read, if $\D,\T$ are skew diagrams of size $n$ and $\delta,\tau\in S_n$, then we write $(\delta,\D)\sim(\tau,\T)$, if $s_{(\delta,\D)}=s_{(\tau,\T)}$.  Additionally, we denote $(\id,\T)$ by $(\T)$ where $\id$ is the identity permutation. 
		
		Note that $(\delta,\D)\sim(\tau,\T)$, that is, $\delta\circ s_{[\D]}=\tau\circ s_{[\T]}$ if and only if $(\td)\circ s_{[\D]}=s_{[\T]}$, namely $(\td,\D)\sim(\T)$. Hence it suffices to only prove the sufficient and necessary conditions in this latter case. 
		
		Throughout we also use the notations of our Lemmas in the previous section. 

		For one direction, if $\D$ is a ribbon corresponding to a composition $\alpha$, and $\T=\D^*\ne \D$, then by comparing entries on the main diagonal of $JT(\D)$ and $JT(\T)$ we see that $(\td)\circ h_{[\alpha]}=h_{(\td)[\alpha]}=h_{[\alpha^*]}$
		since the bijection $\tdr : j\mapsto n+1-\td(j)$ preserves each block of $[\alpha]$. For any composition $\beta$ satisfying $\alpha\tcm{\preccurlyeq}\beta$, we have $[\alpha]\leq [\beta]$ and thus the greatest lower bound of all $[\beta]$ such that $\alpha\tcm{\preccurlyeq} \beta$ equals $[\alpha]$. As a result,  $(\td)\circ h_{[\beta]}=h_{[\beta^*]}$ for all $\beta\succcurlyeq\alpha$. In view of \eqref{E:ribbformula}, we find that $(\td)\circ r_{[\alpha]}=r_{[\alpha^*]}$, namely $(\delta, \D) \sim (\tau,\T)$. This completes the proof in this direction.
		
		For the other direction, suppose that $(\delta,\D)\sim(\tau,\T)$. Then we have $(\td)\circ s_{[\D]}=s_{[\T]}$, that is, by \eqref{E:skewsource} with $\ell(\lambda)=\ell$ and $\A_{ij}=\lambda_i-\mu_j-i+j$, 
		\emma{
			\begin{align}\label{E:schurT}
				s_{[\T]}
				=\sum_{w\in S_{\ell}}\mathrm{sgn}(w)\left(\prod_{i=1}^{\ell}(\A_{i\,w(i)}!)^{-1}\right)
				h_{(\gamma^{-1}\delta)[w_{\A}]}.
			\end{align}
			}
		Let $$\alpha=(\A_{11},\ldots,\A_{ii},\A_{i+1\,i+1},\ldots,\A_{\ell\ell})\mathrm{\ and\ }\gamma=(\A_{11}, \ldots,\A_{i\,i+1},\A_{i+1\,i},\ldots , \A_{\ell\ell})$$be the two (weak) compositions $w_{\A}$ corresponding to when $w=\id$ and \emma{ $w=(i,i+1)$}, respectively. So in particular, 
		$\alpha$ is our usual row-length composition \emma{ of $\D$}. 
		
		\emma{ We notice that no cancellation occurs in (\ref{E:schurT}), because $(\gamma^{-1}\delta)[w_{\A}]=(\gamma^{-1}\delta)[\pi_{\A}]$ for $w,\pi\in S_{\ell}$ if and only if $[w_{\A}]=[\pi_{\A}]$ if and only if $w=\pi$. As a consequence, Lemma \ref{L:2} \emma{ and (\ref{E:schurT})} guarantee} that $(\td)[\alpha]=[\mu]$ and $(\td)[\gamma]=[\nu]$ for some $\mu,\nu\vDash n$. 
		Both $[\mu]$ and $[\nu]$ have common blocks $(\td)[\A_{jj}]_{\alpha}$ for all $j\not\in\{i,i+1\}$ since $[\alpha]$ and $[\gamma]$ have common blocks $[\A_{jj}]_{\alpha}=[\A_{jj}]_{\gamma}$ for all $j\not\in\{i,i+1\}$. 
		
		We claim that two blocks $(\td)[\A_{ii}]_{\alpha}$ and $(\td)[\A_{i+1\,i+1}]_{\alpha}$ must be adjacent in the sense that the union of $(\td)[\A_{ii}]_{\alpha}$ and $(\td)[\A_{i+1\,i+1}]_{\alpha}$ is a set of consecutive integers.
		
		To see this, since $$\A_{ii}+\A_{i+1\,i+1}=\A_{i\,i+1}+\A_{i+1\,i},$$by Lemma \ref{L:1}, and $\A_{ii}<\A_{i\,i+1}$, by the definition of the Jacob-Trudi matrix, we have that 
		\begin{align}\label{E:asubsets}[\A_{ii}]_{\alpha}\subsetneq[\A_{i\,i+1}]_{\gamma}\subseteq
			[\A_{i\,i+1}]_{\gamma}\cup   [\A_{i+1\,i}]_{\gamma}
			=[\A_{ii}]_{\alpha}\cup   [\A_{i+1\,i+1}]_{\alpha}.\end{align}
		It follows that 
		\begin{align}\label{E:a1}
			(\td)[\A_{ii}]_{\alpha}\subsetneq
			(\td)[\A_{i\,i+1}]_{\gamma}\subseteq (\td)[\A_{ii}]_{\alpha}\,\cup (\td)[\A_{i+1\,i+1}]_{\alpha}. 
		\end{align}
		Note that $(\td)[\A_{i\,i+1}]_{\gamma}$ is a block of $[\nu]$, which by definition is a set of consecutive integers, implying that $(\td)[\A_{ii}]_{\alpha}$ and $(\td)[\A_{i+1\,i+1}]_{\alpha}$ must be adjacent as claimed. 
		Consequently, because this is true for all $1\leq i \leq \ell-1$, we have that either
		
		\begin{enumerate}[label=(\Roman*)]
			\item the permutation $\td$ preserves every block of $[\alpha]$, \emmaj{ that is, the subscripts on the main diagonal of $JT(\T)$ are $\A_{11},\ldots,\A_{\ell\ell}$ from northwest to southeast}, or
			\item the bijection $\tdr : j\mapsto n+1-\td(j)$ preserves every block of $[\alpha]$, \emmaj{namely, the subscripts on the main diagonal of $JT(\T)$ are $\A_{\ell\ell},\ldots,\A_{11}$ from northwest to southeast.}
		\end{enumerate}
		
		Before we analyze these two cases, we observe that $(\td)[\A_{1\ell}]=[\A_{1\ell}]$. This is true because $[\A_{1\ell}]$ is the unique largest block of any set partition $[w_{\A}]$ and $(\td)[w_{\A}]$ as a result of (1) of Lemma \ref{L:JTsub}. Now we begin our analysis.
		
		We begin with the simpler case \emmaj{(II)}, where $\tdr : j\mapsto n+1-\td(j)$ preserves every block of $[\alpha]$. 
		In this case we have that 
		$$\A_{11}+\cdots +\A_{\ell\ell}=\A_{1\ell}$$because of the following. First note that since  $\tdr : j\mapsto n+1-\td(j)$ preserves every block of $[\alpha]$, this implies that $1\in \tdr [\A_{11}]$, and hence that $n\in (\td)[ \A_{11}]$. From our observation above we also know that $(\td)[\A_{1\ell}]=[\A_{1\ell}]$. Therefore, since $\A_{11} < \A_{1\ell}$ (for $\ell \neq 1$) we obtain that
		$$n\in (\td)[ \A_{11}]\subsetneq (\td)[\A_{1\ell}]=[\A_{1\ell}]$$implying that $\A_{1\ell}\geq n$. Second,  note that
		$\A_{1\ell}\leq n$ because $\A_{1\ell}$ is a part of a composition of $n$. 
		
		Consequently, since $\A_{1\ell}\geq n$ and $\A_{1\ell}\leq n$, we have that $$\A_{1\ell}=n=\A_{11}+\cdots +\A_{\ell\ell}.$$ By repeatedly applying Lemma \ref{L:1}, we are led to $$\A_{1\ell}=\A_{11}+\cdots +\A_{\ell\ell}=\A_{1\ell}+\A_{21}+\emma{\A_{32}}+\cdots +\A_{\ell\,\ell-1},$$ which gives that $\A_{j+1\,j}=0$ for all $1\leq j \leq \ell -1$,  because $\D$ is connected and so by definition $\A_{j+1\,j}\geq 0$. Hence, $s_{[\D]}$ is the determinant of a matrix whose subdiagonal entries are all \emma{$h_{[0]}=1$}, which implies that the row overlap of each pair of adjacent rows in $\D$ is 1, that is, $\D$ is a ribbon. Now if  $(\td,\D)\sim(\T)$, then $s_{(\td,\D)}=s_{(\id, \T)}$ and so
		\begin{align}\label{E:rho}s_\D = \rho(s_{(\td,\D)})= \rho(s_{(\id, \T)}) = s_\T\end{align}according to Lemma~\ref{L:rho}. \emmaj{Therefore by Lemma \ref{L:overlap}, $\T$ is a ribbon. Under the assumption of (II), the subscripts on the main diagonal of $JT(\T)$ are $\A_{\ell\ell},\ldots,\A_{11}$ from northwest to southeast, thus we find that $\T=\D^*$ and $\D^*\ne \D$ by assumption that $\D\ne \T$. This completes the case  (II)}.

		For the other case \emmaj{(I)}, where $\td$ preserves every block of $[\alpha]$, we will show that the only possible way to realize $(\td, \D)\sim (\T)$ is that $\D=\T$. 
		
		Since $(\td) [\alpha]=[\alpha]$ by assumption, and $(\td)\circ h_\pi = h_{(\td)\pi}$ for any set partition $\pi$, and we know \eqref{E:a1}, we obtain that 
		\begin{align}\label{E:subcase2}
			[\A_{ii}]_{\alpha}\subsetneq (\td)[\A_{i\,i+1}]_{\gamma}\subseteq [\A_{ii}]_{\alpha}\cup[\A_{i+1\,i+1}]_{\alpha}= [\A_{i\,i+1}]_{\gamma}\cup[\A_{i+1\,i}]_{\gamma},
		\end{align}
		by \eqref{E:asubsets}.
		Consequently, 
		because we know that  $(\td)[\A_{i\,i+1}]_\gamma$ and $(\td)[\A_{i+1\,i}]_{\gamma}$ are blocks of $[\nu]$ they must each be a set of consecutive integers,  \emma{ implying $[\A_{ii}]_{\alpha}\subsetneq (\td)[\A_{i\,i+1}]_{\gamma}\subseteq [\A_{i\,i+1}]_{\gamma}$,}
		and so $(\td)[\A_{i\,i+1}]_{\gamma}=[\A_{i\,i+1}]_{\gamma}$. 
		\emma{ This further leads to}  $(\td)[\A_{i+1\,i}]_{\gamma}=[\A_{i+1\,i}]_{\gamma}$ in view of 
		$(\td)([\A_{i\,i+1}]_{\gamma} \cup [\A_{i+1\,i}]_{\gamma})=[\A_{i\,i+1}]_{\gamma} \cup [\A_{i+1\,i}]_{\gamma}$,
		which follows from the last equality of (\ref{E:subcase2}) and the assumption that $(\td) [\alpha]=[\alpha]$.

		We also have that $$(\td)[\A_{jj}]_{\gamma}=(\td)[\A_{jj}]_{\alpha}= [\A_{jj}]_{\alpha} = [\A_{jj}]_{\gamma}$$for all $j\not\in\{i,i+1\}$ since $[\alpha]$ and $[\gamma]$ have common blocks $[\A_{jj}]_{\alpha} = [\A_{jj}]_{\gamma}$ 	for all $j\not\in\{i,i+1\}$, and so we have established that $\td$ preserves every block of $[\gamma]$, that is, $(\td)[\gamma]=[\gamma]$.
		
		We will now keep \eqref{E:rho} in mind to employ both the commutative and noncommutative settings at once, in order to obtain our result  for noncommuting variables. This we will do by determining the subscripts on the main diagonal and subdiagonal entries of the Jacobi-Trudi matrix $JT(\T)$.
		
		First we determine the subscripts on the main diagonal entries of the Jacobi-Trudi matrix $JT(\T)$. In view of Lemma \ref{L:overlap}, the $1$- and $2$-row overlap partitions of $\D$ and $\T$ are the same, that is, the \emph{set} of subscripts on the main diagonal and subdiagonal entries of $JT(\D)$ are the same as the ones of $JT(\T)$, respectively. By $(2)$ $(a)$--$(b)$ of Lemma \ref{L:JTsub}, only the identity permutation, $\id$, gives rise to the term with subscripts $\A_{jj}$ for $1\leq j \leq \ell$ in \emmaj{$s_{\D}$ and $s_{\T}$} respectively. As a result, it follows from $(\td) [\alpha]=[\alpha]$ that the subscript of the \tcm{$(j,j)$th} entry of $JT(\T)$ must be $\A_{jj}$. Thus, we conclude that the subscripts on the main diagonal entries of the Jacobi-Trudi matrix $JT(\T)$ are identical to those  of the Jacobi-Trudi matrix $JT(\D)$. 
		
		Now we turn our attention to the subscripts on the subdiagonal entries of the Jacobi-Trudi matrix $JT(\T)$.
		Because we verified earlier that $(\td) [\gamma]=[\gamma]$, the (weak) composition $\gamma = (\A_{11},\ldots,\A_{i\,i+1},\A_{i+1\,i},\ldots,\A_{\ell\ell})$ appears as the subscript of a complete homogeneous symmetric function in the determinantal expansion of $JT(\T)$. Since the $\A_{jj}$  are the subscripts of the \tcm{$(j,j)$th} entries on the main diagonal of $JT(\T)$, by above, and 
		$$\A_{i\,i+1}>\A_{ii}>\A_{i+1\,i}.$$
		the subscript of the \tcm{$(i+1,i)$th} entry of $JT(\T)$ must be $\A_{i+1\,i}$. 
		
		Because this is true for all $1\leq i \leq \ell-1$, we obtain that the subscripts on the subdiagonal entries of the Jacobi-Trudi matrix $JT(\T)$ are identical to those  of the Jacobi-Trudi matrix $JT(\D)$.  Thus, $$JT(\T)=JT(\D)$$because the subscripts on the main diagonal and subdiagonal entries of any Jacobi-Trudi matrix determine the remaining subscripts of the remaining entries by the definition of the Jacobi-Trudi matrix (\emmaj{or equivalently, the conclusion of Lemma \ref{L:1}}). This leads to $\D=\T$. 
		This completes the case \emmaj{(I)}. 
		
		Consequently, putting both these cases together, if $\D\ne \T$, then $\D$ must be a ribbon and $\T=\D^*\ne \D$. This completes the proof of the other direction, and we are done.\end{proof}
	
	\section*{Acknowledgements}\label{sec:ack}
	Both authors would like to thank the referee for their valuable suggestions and careful reading.
	The first author was supported by the Austrian Research Fund FWF Elise-Richter Project V 898-N, is supported by the Fundamental Research Funds for the Central Universities, Project No. 20720220039 and the National Nature Science Foundation of China (NSFC), Project No. 12201529. The second author is supported in part by the Natural Sciences Engineering and Research Council of Canada. 
	

\end{document}